\newlength{\defbaselineskip}
\newcommand{\setlinespacing}[1]%
           {\setlength{\baselineskip}{#1 \defbaselineskip}}
\newcommand{\dd }{\displaystyle}
\newcommand{\s }{\\[0.2cm]}
\newcommand{\h }{\hspace*{18pt}}
\newcommand{\q}{\quad}
\newcommand{\bq }{\begin{equation}}
\newcommand{\eq }{\end{equation}}
\newcommand{\bbb }{\begin{eqnarray}}
\newcommand{\eee }{\end{eqnarray}}
\newcommand{\bb }{\begin{eqnarray*}}
\newcommand{\ee }{\end{eqnarray*}}
\newcommand{\ed }{\end{document}}
\theoremstyle{plain}
\newtheorem{thm}{Theorem}
\newtheorem{cor} {Corollary}
\newtheorem{rem}{Remark}
\theoremstyle{definition}
\theoremstyle{example}
\begin{document}
\begin{center}
{SOME NEW IDENTITIES OF GENERALIZED FIBONACCI AND GENERALIZED PELL
NUMBERS VIA A NEW TYPE OF NUMBERS}\s
 {\textbf{\textsf{ W.M. Abd-Elhameed$^{1,2}$and N.A. Zeyada$^{1,2}$}}}\\[0.2cm]
$^1$Department of Mathematics, Faculty of Science, University
of Jeddah, Jeddah, Saudi Arabia\\[0.2cm]
\centerline{\begin{small} $^2$ Department of Mathematics, Faculty of
Science, Cairo University, Giza, Egypt
\end{small}}
 \centerline{\begin{small}
  \q E-mail: walee$_{-}$9@yahoo.com (Abd-Elhameed),
 E-mail: nasmz@yahoo.com (Zeyada),\end{small}}
\end{center}
\hrule
\[\]
\begin{abstract}
 This paper is concerned with developing some new identities of
 generalized Fibonacci numbers and generalized Pell numbers. A new class of generalized numbers
 is introduced for this purpose. The two well-known
 identities of Sury and Marques which are recently developed are deduced as special cases.
 Moreover, some other
 interesting identities involving the celebrated Fibonacci,
 Lucas, Pell and Pell-Lucas numbers are also deduced.
 \end{abstract}

    \noindent
 {\bf Keywords:} Generalized Fibonacci numbers,
 generalized Pell numbers, recurrence relations
 \s
 \textbf{AMS classification:} 33Cxx; 11Yxx,11Zxx,11B39

\section{Introduction}
\h Many number sequences can be generated by recurrence relations of
order two, among these numbers are the celebrated Fibonacci, Lucas,
Pell and Pell-Lucas numbers. It is well-known that these sequences
of numbers have important parts in mathematics. They are of
fundamental importance in the fields of combinatorics and number
theory, see for example
\cite{keskin2011fibonacci,koshy2011fibonacci}. Recently, the studies
concerned with various generalizations of Fibonacci and Lucas
numbers have attracted a number of authors, see for example
\cite{wang2015some,gulec2013new,
falcon2007fibonacci,yazlik2012note,melham1999sums}.\s
   \h The $n$-th Fibonacci and Lucas numbers can be generated respectively, with the aid
   of the two recurrence relations:
   \[F_{n+2}=F_{n+1}+F_{n},\quad F_{0}=0,\, F_{1}=1,\]
   and
     \[L_{n+2}=L_{n+1}+L_{n},\quad L_{0}=2,\, L_{1}=1.\]
     A few terms of Fibonacci sequence are\\
     \[0,1,1,2,3,5,8,13,21,\ldots,\]
     while a few terms of Lucas sequence are
     \[2,1,3,4,7,11,18,29,47,\ldots.\]
     Note that the Lucas numbers $L_{n}$ are linked with the Fibonacci numbers
   $F_{n}$ by the relation
   \[L_{n}=F_{n-1}+F_{n+1},n\ge 1.\]

     The generation of the two sequences of Fibonacci and Lucas numbers can be unified via the
     sequence of generalized Fibonacci numbers $(G^{a,b}_{n})_{n\ge 0}$.
     These numbers can be generated with the aid of the following recurrence relation:
      \bq
      \label{GenFib}
      G^{a,b}_{n+2}=G^{a,b}_{n+1}+G^{a,b}_{n},\quad G^{a,b}_{0}=b-a,G^{a,b}_{1}=a.
       \eq
      We note that the Fibonacci
      and Lucas sequences are special classes of $(G^{a,b}_{n})_{n\ge 0}$.
      Explicitly, we have
      \[F_{n}=G^{1,1}_{n},\quad L_{n}=G^{1,3}_{n}.\]
   An important identity of $G^{a,b}_{n}$ is
   \[
   G^{a,b}_{n+2}=a\, F_{n}+b\, F_{n+1}.
   \]
   For properties of Fibonacci, Lucas and generalized Fibonacci numbers, one
   can be referred to the beautiful book of Koshy
   \cite{koshy2011fibonacci}.\\
  There are other two important sequences of numbers, namely Pell numbers
  $(P_{n})_{n\ge 0}$, and Pell-Lucas numbers $(Q_{n})_{n\ge 0}$.
  These two sequences may be generated by the following two recurrence
  relations:
   \[P_{n+2}=2\, P_{n+1}+P_{n},\quad P_{0}=0,\, P_{1}=1,\]
   and
     \[Q_{n+2}=2\, Q_{n+1}+Q_{n},\quad Q_{0}=2,\, Q_{1}=2.\]
     A few terms of Pell sequence are
     \[0,1,2,5,12,29,70,169,408,\ldots,\]
     while a few terms of Pell-Lucas sequence are
     \[2,2,6,14,34,82,198,478,1154,\ldots.\]
     Note that the numbers $Q_{n}$ are linked with the numbers $P_{n}$ by the
   relation:
   \[Q_{n}=P_{n-1}+P_{n+1},n\ge 1.\]
 Now, we introduce the sequence of generalized Pell numbers, which we
denote
     $(P^{a,b}_{n})$. This sequence may be generated by the recurrence relation
       \bq
       \label{GenPell}
       P^{a,b}_{n+2}=2\, P^{a,b}_{n+1}+P^{a,b}_{n},\quad P^{a,b}_{0}=b-2a,P^{a,b}_{1}=a.
       \eq
      It is clear that the generalized Pell sequence $(P^{a,b}_{n})_{n\ge 0}$ generalizes
      the two sequences of Pell and Pell Lucas numbers. In fact, we
      have
      \[P_{n}=P^{1,2}_{n},\quad Q_{n}=P^{2,6}_{n}.\]
   In this letter, we construct a more general sequence of numbers aiming to
   unify the construction of the two sequences of the generalized
   Fibonacci numbers $(G^{a,b}_{n})_{n\ge 0}$ in \eqref{GenFib} and the generalized Pell numbers
   $(P^{a,b}_{n})_{n\ge 0}$ in \eqref{GenPell}. For this purpose, we consider the
   generalized sequence of numbers $(U^{a,b,r}_{n})$
   generated by the recurrence relation:
   \bq
   \label{recgen}
 U^{a,b,r}_{n+2}=r\, U^{a,b,r}_{n+1}+U^{a,b,r}_{n},
 \quad U^{a,b,r}_{0}=b-r\, a,\ U^{a,b,r}_{1}=a.
 \eq
 It is clear from \eqref{recgen} that the two sequences $(G^{a,b}_{n})$ and
 $(P^{a,b}_{n})$ are particular sequences of the more general sequence
  $(U^{a,b,r}_{n})$.
 In fact, we have
 \[G^{a,b}_{n}=U^{a,b,1}_{n},\qquad P^{a,b}_{n}=U^{a,b,2}_{n}.\]
 Thus, the main advantage of introducing the sequence
 $(U^{a,b,r}_{n})$ is that the four sequences of Fibonacci $(F_{n})$, Lucas
 $(L_{n}),$ Pell
 $(P_{n}),$ Pell-Lucas
 $(Q_{n})$ numbers can be deduced as particular cases of the generalized sequence of numbers
 $(U^{a,b,r}_{n})$.

   \section{New identities of some generalized numbers}
    This section is devoted to presenting some new identities of the three generalized
    sequences of numbers $(G^{a,b}_{n}),(P^{a,b}_{n})$ and $(U^{a,b,r}_{n})$ which
    introduced
    in Section 1. We show that one of the presented identities generalize the two identities of
    Sury and Marques which are recently developed. In addition, some other
    new identities involving
    the celebrated Fibonacci, Lucas, Pell and Pell-Lucas numbers
    are also deduced.
     \begin{thm}
     \label{Thm1}
     For every nonnegative integer $m$ and for all $c\in \mathbb{R}-\{0\}$, the
     following identity is valid
     \bq
     \label{mainidentity}
     c^{m+1}\, U^{a,b,r}_{m+1}=b-r\, a+\dd\sum_{i=0}^{m}
     c^i\, \left\{(r-1)\, \, U^{a,b,r}_{i}+
     (c-1)\, U^{a,b,r}_{i+1}+U^{a,b,r}_{i-1}\right\}.
     \eq
    \end{thm}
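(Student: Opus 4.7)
The plan is to recognize that the right-hand side is essentially a telescoping sum disguised by the recurrence \eqref{recgen}. The key algebraic manipulation I would perform is to split the summand as
\[
c^i\bigl\{(r-1)\,U^{a,b,r}_{i}+(c-1)\,U^{a,b,r}_{i+1}+U^{a,b,r}_{i-1}\bigr\}
=\bigl(c^{i+1}U^{a,b,r}_{i+1}-c^{i}U^{a,b,r}_{i}\bigr)
-c^{i}\bigl(U^{a,b,r}_{i+1}-r\,U^{a,b,r}_{i}-U^{a,b,r}_{i-1}\bigr).
\]
This rewriting is motivated by the observation that $(r-1)U_i+(c-1)U_{i+1}=cU_{i+1}-U_i-(U_{i+1}-rU_i)$, so grouping $U_{i-1}$ with the latter parenthesis produces precisely the recurrence expression that vanishes.

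Next I would sum both sides from $i=0$ to $m$. The first piece is a telescoping sum evaluating to $c^{m+1}U^{a,b,r}_{m+1}-U^{a,b,r}_{0}=c^{m+1}U^{a,b,r}_{m+1}-(b-ra)$. For the second piece, the recurrence \eqref{recgen} immediately gives $U^{a,b,r}_{i+1}-r\,U^{a,b,r}_{i}-U^{a,b,r}_{i-1}=0$ for every $i\geq 1$. Rearranging yields exactly \eqref{mainidentity}.

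The one genuinely delicate point, and the step I expect to be the main obstacle, is the boundary term at $i=0$, where $U^{a,b,r}_{-1}$ appears. I would handle this by extending the sequence backward one step via the same recurrence, setting $U^{a,b,r}_{-1}:=U^{a,b,r}_{1}-r\,U^{a,b,r}_{0}=a-r(b-ra)$, so that the identity $U^{a,b,r}_{i+1}-r\,U^{a,b,r}_{i}-U^{a,b,r}_{i-1}=0$ holds at $i=0$ as well and the second sum collapses entirely.

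As a sanity check before writing the final proof, I would verify the base case $m=0$ directly: the right-hand side becomes $(b-ra)+(r-1)(b-ra)+(c-1)a+U^{a,b,r}_{-1}=r(b-ra)+ca-a+a-r(b-ra)=ca=c\,U^{a,b,r}_{1}$, matching the left-hand side. An alternative route, should the telescoping argument require more justification, would be a short induction on $m$: the inductive step follows by applying the recurrence once to rewrite $c^{m+2}U^{a,b,r}_{m+2}=c\cdot c^{m+1}U^{a,b,r}_{m+1}$ and matching the added summand at $i=m+1$. Either route is short; the telescoping presentation is cleaner and I would adopt it.
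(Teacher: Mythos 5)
Your proposal is correct, but it follows a genuinely different route from the paper. The paper proves \eqref{mainidentity} by induction on $m$: it notes both sides equal $ac$ at $m=0$; for the inductive step it detaches the $i=m+1$ term, replaces the remaining partial sum by $c^{m+1}U^{a,b,r}_{m+1}$ using the hypothesis, and reduces $c^{m+1}\left[U^{a,b,r}_{m+1}+(r-1)U^{a,b,r}_{m+1}+(c-1)U^{a,b,r}_{m+2}+U^{a,b,r}_{m}\right]$ to $c^{m+2}U^{a,b,r}_{m+2}$ by one application of \eqref{recgen}. Your decomposition
\[
c^i\bigl\{(r-1)U^{a,b,r}_{i}+(c-1)U^{a,b,r}_{i+1}+U^{a,b,r}_{i-1}\bigr\}
=\bigl(c^{i+1}U^{a,b,r}_{i+1}-c^{i}U^{a,b,r}_{i}\bigr)
-c^{i}\bigl(U^{a,b,r}_{i+1}-rU^{a,b,r}_{i}-U^{a,b,r}_{i-1}\bigr)
\]
checks out by direct expansion, and summing it yields the identity in one telescoping stroke; this is essentially the paper's induction run in reverse, but it explains \emph{why} the identity holds (recurrence plus telescope) rather than merely verifying it. A further merit of your write-up is that it makes explicit a point the paper leaves silent: the $i=0$ summand contains $U^{a,b,r}_{-1}$, which the paper never defines, and your backward extension $U^{a,b,r}_{-1}:=U^{a,b,r}_{1}-r\,U^{a,b,r}_{0}=a-r(b-ra)$ is precisely the convention needed for the statement (and for the paper's own claim that the $m=0$ case gives $ac$) to make sense. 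Both arguments are equally rigorous once that convention is fixed.
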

   \begin{proof}
 We will proceed by induction. It is clear that each of the
 two sides of \eqref{mainidentity} is equal to: $(a\, c)$ in case of $m=0$. Now, assume
 that \eqref{mainidentity} is valid, hence to complete the proof, we have
 to show the validity of the following identity:
 \bq
     \label{mainidentity21}
     c^{m+2}\, U^{a,b,r}_{m+2}=b-r\, a+\dd\sum_{i=0}^{m+1}
     c^i\, \left\{(r-1)\, \, U^{a,b,r}_{i}+
     (c-1)\, U^{a,b,r}_{i+1}+U^{a,b,r}_{i-1}\right\}.
     \eq
Now, we write the right hand side of \eqref{mainidentity21} in the
form \bq
\begin{split}
 &b-r\, a+\dd\sum_{i=0}^{m+1}
     c^i\, \left\{(r-1)\, \, U^{a,b,r}_{i}+
     (c-1)\, U^{a,b,r}_{i+1}+U^{a,b,r}_{i-1}\right\}=\\
&b-r\, a+\dd\sum_{i=0}^{m}
     c^i\, \left\{(r-1)\, \, U^{a,b,r}_{i}+
     (c-1)\,
     U^{a,b,r}_{i+1}+U^{a,b,r}_{i-1}\right\}\\
     &+c^{m+1}\left[(r-1)\,
     U^{a,b,r}_{m+1}+(c-1)\, U^{a,b,r}_{m+2}+U^{a,b,r}_{m}\right].
\end{split}
\eq
 If we make use of the valid relation \eqref{mainidentity}, then
 the latter equation can be turned into
 \bq
 \begin{split}
 \label{ind2}
 &b-r\, a+\dd\sum_{i=0}^{m+1}
     c^i\, \left\{(r-1)\, \, U^{a,b,r}_{i}+
     (c-1)\, U^{a,b,r}_{i+1}+U^{a,b,r}_{i-1}\right\}=\\
&c^{m+1}\, U^{a,b,r}_{m+1}+c^{m+1}\left[(r-1)\,
     U^{a,b,r}_{m+1}+(c-1)\, U^{a,b,r}_{m+2}+U^{a,b,r}_{m}\right].
\end{split}
\eq
 Based on the recurrence relation \eqref{recgen}, it is easy to see that
\eqref{ind2} can be written alternatively as \bq
     \label{mainidentity23}
     b-r\, a+\dd\sum_{i=0}^{m+1}
     c^i\, \left\{(r-1)\, \, U^{a,b,r}_{i}+
     (c-1)\, U^{a,b,r}_{i+1}+U^{a,b,r}_{i-1}\right\}=c^{m+2}\, U^{a,b,r}_{m+2}.
     \eq
This completes the proof of Theorem \ref{Thm1}.
 \end{proof}
  \noindent
 The following identity of the generalized Fibonacci numbers $ G^{a,b}_{n}$ is
 a direct consequence of identity \eqref{mainidentity}.

    \begin{thm}
     For every nonnegative integer $m$ and for all $c\in \mathbb{R}-\{0\}$, the
     following identity holds for generalized Fibonacci numbers
     \bq
     \label{mainidentityGenFib}
     c^{m+1}\, G^{a,b}_{m+1}=b-a+\dd\sum_{i=0}^{m}
     c^i\, \left\{(c-1)\, G^{a,b}_{i+1}+G^{a,b}_{i-1}\right\}.
     \eq
    \end{thm}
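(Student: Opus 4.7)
The plan is to derive \eqref{mainidentityGenFib} by specializing the master identity \eqref{mainidentity} of Theorem \ref{Thm1} at the parameter value $r=1$. Recall from Section 1 that the recurrence \eqref{GenFib} defining the generalized Fibonacci numbers $G^{a,b}_n$ coincides with the recurrence \eqref{recgen} defining $U^{a,b,r}_n$ at $r=1$, and the initial conditions $U^{a,b,1}_0 = b - a = G^{a,b}_0$ and $U^{a,b,1}_1 = a = G^{a,b}_1$ also match. Consequently $G^{a,b}_n = U^{a,b,1}_n$ for every $n$, so the identity to be proved is nothing but the $r=1$ case of \eqref{mainidentity} rewritten in $G$-notation.

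With this identification in hand, the substitution $r = 1$ into \eqref{mainidentity} is routine. The constant $b - r\,a$ on the right-hand side collapses to $b - a$, which matches the constant in \eqref{mainidentityGenFib}. Inside the summand, the coefficient $(r-1)$ of $U^{a,b,r}_i$ vanishes, so that entire term drops out, leaving only $(c-1)\,U^{a,b,1}_{i+1} + U^{a,b,1}_{i-1}$, which equals $(c-1)\,G^{a,b}_{i+1} + G^{a,b}_{i-1}$. On the left-hand side, $c^{m+1}\,U^{a,b,1}_{m+1}$ becomes $c^{m+1}\,G^{a,b}_{m+1}$. The resulting equation is precisely \eqref{mainidentityGenFib}.

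There is essentially no genuine obstacle. The only mild point worth noting is the appearance of $G^{a,b}_{-1}$, since the index $i-1$ equals $-1$ in the $i=0$ summand. This is handled by the same convention that must already be implicit in Theorem \ref{Thm1} for $U^{a,b,r}_{-1}$; concretely, one may define $G^{a,b}_{-1}:=G^{a,b}_{1}-G^{a,b}_{0}=2a-b$ by running the recurrence \eqref{GenFib} backward one step. With that convention fixed, the proof is complete in a single line: apply Theorem \ref{Thm1} with $r=1$ and replace $U^{a,b,1}_n$ by $G^{a,b}_n$ throughout.
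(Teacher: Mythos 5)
Your proof is correct and is exactly the paper's own argument: the paper also obtains \eqref{mainidentityGenFib} by setting $r=1$ in \eqref{mainidentity} and identifying $G^{a,b}_{n}=U^{a,b,1}_{n}$. Your extra remark pinning down the convention $G^{a,b}_{-1}=2a-b$ is a sensible clarification the paper leaves implicit.
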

    \begin{proof}
    Identity \eqref{mainidentityGenFib} follows immediately from
    identity \eqref{mainidentity} by setting $r=1$.
    \end{proof}

     Several important identities involving Fibonacci and Lucas numbers
     can be deduced as special cases of
     \eqref{mainidentityGenFib}. These identities are stated in the following
     corollaries.
    \begin{cor}
     If we set $a=b=1$ in \eqref{mainidentityGenFib}, then the following
     identity is obtained:
    \bq
     \label{id2}
     c^{m+1}\, F_{m+1}=\dd\sum_{i=0}^{m}
     c^i\, \left\{(c-1)\, F_{i+1}+F_{i-1}\right\},
     \eq
     and in particular, we have the following two important
     identities:
     \bq
     \label{Sury}
     2^{m+1}\, F_{m+1}=\dd\sum_{i=0}^{m}
     2^i\, L_{i},
     \eq
      and
    \bq
    \label{Marq}
     3^{m+1}\, F_{m+1}=\dd\sum_{i=0}^{m}
     3^i\, L_{i}+\dd\sum_{i=0}^{m+1}3^{i-1}\, F_{i}.
     \eq
     \begin{rem}
      The identity in \eqref{Sury} is in agreement with that obtained in
      \cite{sury2014polynomial,kwong2014alternate},
      while the identity \eqref{Marq} coincides with that obtained in
      \cite{marques2015new}.
     \end{rem}
 \end{cor}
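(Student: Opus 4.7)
The first identity \eqref{id2} should fall out of \eqref{mainidentityGenFib} essentially by inspection: since $F_n = G^{1,1}_n$, substituting $a=b=1$ kills the constant term $b-a$ on the right, leaving exactly the sum claimed. So the real content of the corollary is the two specializations \eqref{Sury} and \eqref{Marq}, which I would handle by choosing $c=2$ and $c=3$ in \eqref{id2} and then rewriting the brackets $\{(c-1)F_{i+1}+F_{i-1}\}$ in terms of Lucas numbers via the identity $L_i = F_{i-1}+F_{i+1}$ recorded in Section 1.

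For \eqref{Sury}, I would plug $c=2$ into \eqref{id2} to obtain
\[
2^{m+1}F_{m+1}=\sum_{i=0}^{m}2^{i}\bigl(F_{i+1}+F_{i-1}\bigr),
\]
and then recognize the bracketed expression as $L_i$. The only subtlety is that the identity $L_i=F_{i-1}+F_{i+1}$ is stated for $i\ge 1$ in the introduction, so the $i=0$ term has to be verified separately; using the backward extension $F_{-1}=1$, one checks $F_{-1}+F_{1}=1+1=2=L_{0}$, so the formula holds at $i=0$ as well and the substitution is justified.

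For \eqref{Marq}, setting $c=3$ in \eqref{id2} gives
\[
3^{m+1}F_{m+1}=\sum_{i=0}^{m}3^{i}\bigl(2F_{i+1}+F_{i-1}\bigr).
\]
The key algebraic step is the regrouping $2F_{i+1}+F_{i-1}=(F_{i+1}+F_{i-1})+F_{i+1}=L_{i}+F_{i+1}$, which splits the right hand side into $\sum_{i=0}^{m}3^{i}L_{i}+\sum_{i=0}^{m}3^{i}F_{i+1}$. After a reindexing $j=i+1$ in the second sum, it becomes $\sum_{j=1}^{m+1}3^{j-1}F_{j}$, and because $F_{0}=0$ the range may be extended down to $j=0$ at no cost, giving the desired form $\sum_{i=0}^{m+1}3^{i-1}F_{i}$.

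There is no real obstacle beyond bookkeeping; the only points that require care are the $i=0$ boundary in the Lucas rewriting (handled by the convention $F_{-1}=1$) and the harmless shift $\sum_{i=0}^{m}3^{i}F_{i+1}=\sum_{i=0}^{m+1}3^{i-1}F_{i}$ used to match the stated form of Marques's identity.
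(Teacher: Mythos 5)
Your proposal is correct and follows exactly the route the paper intends (the paper states this corollary without writing out a proof): substitute $a=b=1$ to kill the constant $b-a$, then specialize $c=2$ and $c=3$ and rewrite via $L_i=F_{i-1}+F_{i+1}$. Your extra care at the $i=0$ boundary (using $F_{-1}=1$) and the reindexing with $F_0=0$ fills in details the paper leaves implicit, and both check out.
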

     \begin{cor}
        If we set $a=1,b=3$ in \eqref{mainidentityGenFib}, then the following
     identity is obtained:
     \bq
     \label{identity3}
     c^{m+1}\, L_{m+1}=2+\dd\sum_{i=0}^{m}
     c^i\, \left\{(c-1)\, L_{i+1}+L_{i-1}\right\}.
     \eq
     \end{cor}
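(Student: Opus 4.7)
The plan is to obtain identity \eqref{identity3} as a direct specialization of the generalized-Fibonacci identity \eqref{mainidentityGenFib}, exactly as the statement of the corollary already suggests. The key ingredient is the identification $L_{n}=G^{1,3}_{n}$ recorded in the introduction; this is justified by noting that the generalized Fibonacci recurrence \eqref{GenFib} with $a=1$, $b=3$ coincides with the Lucas recurrence, and the initial conditions match, since $G^{1,3}_{0}=b-a=2=L_{0}$ and $G^{1,3}_{1}=a=1=L_{1}$.

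Concretely, I would substitute $a=1$ and $b=3$ into \eqref{mainidentityGenFib}. On the left-hand side this turns $c^{m+1}G^{a,b}_{m+1}$ into $c^{m+1}L_{m+1}$, the constant term $b-a$ becomes $2$, and every $G^{1,3}_{j}$ appearing inside the sum is replaced by $L_{j}$. No further manipulation is needed.

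The only point that deserves a brief comment is the term $L_{-1}$ that appears in the $i=0$ summand. One resolves this by extending the Lucas recurrence $L_{n+2}=L_{n+1}+L_{n}$ backward, which yields $L_{-1}=L_{1}-L_{0}=-1$; this agrees with the value $G^{1,3}_{-1}=G^{1,3}_{1}-G^{1,3}_{0}=-1$ already implicit in \eqref{mainidentityGenFib}. So the indexing convention used in the generalized identity is consistent with the Lucas convention, and there is no genuine obstacle.

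In summary, the proof is a one-line substitution into \eqref{mainidentityGenFib}; the main (and very mild) thing to verify is the identification $L_{n}=G^{1,3}_{n}$ on both the positive and the $n=-1$ indices, after which \eqref{identity3} follows immediately.
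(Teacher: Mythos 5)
Your proposal is correct and matches the paper's (implicit) argument exactly: the corollary is obtained by the direct substitution $a=1$, $b=3$ into \eqref{mainidentityGenFib} together with the identification $L_{n}=G^{1,3}_{n}$, so that $b-a=2$ and every $G^{1,3}_{j}$ becomes $L_{j}$. Your additional check that the backward-extended value $L_{-1}=-1$ agrees with $G^{1,3}_{-1}$ is a sensible (if mild) point of care that the paper leaves unstated.
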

     \begin{cor}
    If we set $c=1$ in \eqref{mainidentityGenFib}, then the following
     identity holds for generalized Fibonacci numbers:
 \bq
     \label{id1}
     G^{a,b}_{m+1}=b-a+\dd\sum_{i=0}^{m}
      G^{a,b}_{i-1}.
     \eq
    \end{cor}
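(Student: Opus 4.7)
The plan is to obtain \eqref{id1} as an immediate specialization of the previously established identity \eqref{mainidentityGenFib}, rather than by any new inductive argument. Concretely, I would substitute $c = 1$ into \eqref{mainidentityGenFib}. On the left-hand side, $c^{m+1}=1$, so the factor collapses and we are left with simply $G^{a,b}_{m+1}$. Inside the summation, $c^i = 1$ for every $i$, and, crucially, the coefficient $(c-1)$ multiplying $G^{a,b}_{i+1}$ becomes zero, eliminating that term entirely. The only surviving contribution from each summand is $G^{a,b}_{i-1}$, while the additive constant $b-a$ outside the sum is unaffected by the substitution. Assembling these observations yields \eqref{id1} directly.

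I do not foresee any real obstacle, since the derivation is a pure substitution and requires no further use of the recurrence \eqref{GenFib} beyond what is already built into \eqref{mainidentityGenFib}. The only minor point worth flagging is the appearance of $G^{a,b}_{-1}$ in the $i=0$ term of the sum; this is understood via the backward extension of \eqref{GenFib}, namely $G^{a,b}_{-1}=G^{a,b}_{1}-G^{a,b}_{0}=2a-b$, so the right-hand side is unambiguously defined and no side argument is needed. Because of this, the proof can be stated in essentially one line, which matches the style used by the authors for the preceding corollaries derived from \eqref{mainidentityGenFib}.
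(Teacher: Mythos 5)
Your proposal is correct and matches the paper's approach exactly: the corollary is obtained by direct substitution of $c=1$ into \eqref{mainidentityGenFib}, which annihilates the $(c-1)\,G^{a,b}_{i+1}$ term and reduces each $c^i$ to $1$. Your remark on interpreting $G^{a,b}_{-1}=2a-b$ via the backward extension of the recurrence is a sensible clarification the paper leaves implicit.
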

  \noindent
     Now, we state an identify involving the generalized Pell numbers $P^{a,b}_{n}$.
    \begin{thm}
     For every nonnegative integer $m$ and for all $c\in \mathbb{R}-\{0\}$, the
     following identity holds for generalized Pell numbers
     \bq
     \label{mainidentity25}
      c^{m+1}\, P^{a,b}_{m+1}=b-2\, a+\dd\sum_{i=0}^{m}
     c^i\, \left\{P^{a,b}_{i}+
     (c-1)\, P^{a,b}_{i+1}+P^{a,b}_{i-1}\right\}.
     \eq
    \end{thm}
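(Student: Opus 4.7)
The plan is to derive this identity as an immediate specialization of Theorem~\ref{Thm1}. Since the excerpt already records that $P^{a,b}_{n}=U^{a,b,2}_{n}$, the natural approach is to take the master identity \eqref{mainidentity}, substitute $r=2$ throughout, and check that every term on both sides lines up with the claimed identity \eqref{mainidentity25}.

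Concretely, I would proceed in three short steps. First, I write down \eqref{mainidentity} for a general nonnegative integer $m$ and $c\in\mathbb{R}\setminus\{0\}$. Second, I set $r=2$ and replace each $U^{a,b,2}_{k}$ by $P^{a,b}_{k}$ using the identification noted after \eqref{recgen}. This turns the constant term $b-r\,a$ into $b-2a$, which matches; it turns the coefficient $(r-1)$ multiplying $U^{a,b,r}_{i}$ into $1$, so the first summand in the braces becomes simply $P^{a,b}_{i}$; the middle term becomes $(c-1)\,P^{a,b}_{i+1}$; and the last becomes $P^{a,b}_{i-1}$. Third, I observe that the left-hand side $c^{m+1}\,U^{a,b,2}_{m+1}$ is exactly $c^{m+1}\,P^{a,b}_{m+1}$, which coincides with the left-hand side of \eqref{mainidentity25}.

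Because Theorem~\ref{Thm1} has already been established by induction using the three-term recurrence \eqref{recgen}, and the generalized Pell recurrence \eqref{GenPell} is precisely the $r=2$ instance of that recurrence (with matching initial data $P^{a,b}_{0}=b-2a$, $P^{a,b}_{1}=a$), no further induction or computation is required here. There is essentially no obstacle; the only minor point requiring care is the convention for $U^{a,b,r}_{-1}$ (and hence $P^{a,b}_{-1}$), which must be interpreted consistently with the induction base case $i=0$ so that the summand at $i=0$ in \eqref{mainidentity25} agrees with the one in \eqref{mainidentity}. Once that convention is fixed in line with the parent theorem, the identity follows in a single line.
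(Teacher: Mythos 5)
Your proposal is correct and matches the paper's own proof exactly: the authors also obtain \eqref{mainidentity25} by setting $r=2$ in \eqref{mainidentity} and using $P^{a,b}_{n}=U^{a,b,2}_{n}$, so that $b-r\,a$ becomes $b-2a$ and the coefficient $r-1$ becomes $1$. Your added remark about fixing the convention for the index $-1$ term is a reasonable point of care but does not change the argument.
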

    \begin{proof}
   Identity \eqref{mainidentity25} follows immediately from
    identity \eqref{mainidentity} by setting $r=2$.
    \end{proof}
    The following identities follow from identity
    \eqref{mainidentity25} for particular choices of its parameters.
    \begin{cor}
        If we set $a=1,b=2$ in \eqref{mainidentity25}, then the following
     identity is obtained:
     \bq
     \label{identity3}
     c^{m+1}\, P_{m+1}=\dd\sum_{i=0}^{m}
     c^i\, \left\{P_{i}+
     (c-2)\, P_{i+1}+Q_{i}\right\},
     \eq
     and in particular, we have
   \bq
     \label{identity3}
     2^{m+1}\, P_{m+1}=\dd\sum_{i=0}^{m}
     2^i\, \left\{P_{i}+Q_{i}\right\}.
     \eq
     \end{cor}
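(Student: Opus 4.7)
The plan is to derive both displayed identities as immediate specializations of the generalized Pell identity \eqref{mainidentity25}, which has already been established (by setting $r=2$ in Theorem \ref{Thm1}). The strategy is purely substitutional, followed by one short algebraic rearrangement that brings in the Pell--Lucas numbers.

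First I would substitute $a=1,\, b=2$ into \eqref{mainidentity25}. By definition $P^{1,2}_n = P_n$, and the constant term satisfies $b-2a = 0$, so the identity collapses to
$$c^{m+1}\, P_{m+1} \;=\; \sum_{i=0}^{m} c^i\left\{P_{i} + (c-1)\, P_{i+1} + P_{i-1}\right\}.$$
Next I would rewrite the summand in the form asserted by the corollary. Splitting $(c-1)\,P_{i+1} = (c-2)\,P_{i+1} + P_{i+1}$ and then invoking the relation $Q_i = P_{i-1}+P_{i+1}$ recorded in the introduction, the bracket becomes $P_i + (c-2)\,P_{i+1} + Q_i$, which is exactly the first displayed identity. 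Specializing $c=2$ kills the coefficient $(c-2)$, leaving $2^{m+1}\,P_{m+1} = \sum_{i=0}^{m} 2^i(P_i + Q_i)$, which is the second identity.

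I do not anticipate any substantive obstacle, since the argument is a two-line substitution plus an application of $Q_i = P_{i-1}+P_{i+1}$. The only minor care point is the $i=0$ term of the sum, which involves $P_{-1}$; extending the Pell recurrence backwards from $P_1 = 2P_0 + P_{-1}$ gives $P_{-1}=1$, consistent with $Q_0 = P_{-1}+P_1 = 2$, so the specialisations remain well-defined and no ambiguity arises.
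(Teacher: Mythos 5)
Your proposal is correct and matches the paper's (largely implicit) argument: the paper simply states that the corollary follows from \eqref{mainidentity25} by choosing $a=1,b=2$, and your substitution, the split $(c-1)P_{i+1}=(c-2)P_{i+1}+P_{i+1}$ combined with $Q_i=P_{i-1}+P_{i+1}$, and the specialization $c=2$ supply exactly the omitted details. Your remark about the $i=0$ term requiring $P_{-1}=1$ is a worthwhile point of care that the paper glosses over.
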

     \begin{cor}
        If we set $a=2,b=6$ in \eqref{mainidentity25}, then the following
     identity is obtained:
     \bq
     \label{identity3}
     c^{m+1}\, Q_{m+1}=2+\dd\sum_{i=0}^{m}
     c^i\, \left\{Q_{i}+
     (c-1)\, Q_{i+1}+Q_{i-1}\right\}.
     \eq
         \end{cor}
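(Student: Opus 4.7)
The plan is to obtain this corollary as an immediate specialization of Theorem on generalized Pell numbers, i.e.\ identity \eqref{mainidentity25}, under the parameter choice $a=2,\ b=6$. The only thing one needs to verify before substituting is that the generalized Pell sequence $P^{2,6}_n$ coincides with the Pell--Lucas sequence $Q_n$. This is already asserted in Section~1, but I would record it explicitly: both sequences satisfy the same recurrence $x_{n+2}=2x_{n+1}+x_n$, and the initial conditions match, since $P^{2,6}_0=b-2a=6-4=2=Q_0$ and $P^{2,6}_1=a=2=Q_1$. A short induction then gives $P^{2,6}_n=Q_n$ for every $n\ge 0$, and the standard backward extension of the recurrence takes care of the single negative index that appears in the sum (the $i=0$ term contains $Q_{-1}$, which is fixed by $Q_{1}=2Q_0+Q_{-1}$).

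Once this identification is in place, the proof is a direct substitution. First I would set $a=2$ and $b=6$ in \eqref{mainidentity25}, which turns the constant $b-2a$ into $2$. Then I would replace $P^{a,b}_{m+1}$ by $Q_{m+1}$ on the left-hand side and $P^{a,b}_{i},\ P^{a,b}_{i+1},\ P^{a,b}_{i-1}$ by $Q_i,\ Q_{i+1},\ Q_{i-1}$ inside the sum. The resulting identity is exactly
\[
c^{m+1}\,Q_{m+1}=2+\sum_{i=0}^{m}c^{i}\bigl\{Q_{i}+(c-1)\,Q_{i+1}+Q_{i-1}\bigr\},
\]
which is what was claimed.

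Since everything is a clean specialization of an identity already proved (by induction) in Theorem~\ref{Thm1}, there is no genuine obstacle here; the only point that could be mishandled is the appearance of $Q_{-1}$ at $i=0$, and this is resolved simply by invoking the backward recurrence for $Q_n$. Consequently, the corollary follows immediately from \eqref{mainidentity25} with $(a,b)=(2,6)$, and no further argument is required.
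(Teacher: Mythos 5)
Your proof is correct and follows exactly the route the paper intends: the corollary is obtained by direct substitution of $a=2$, $b=6$ into identity \eqref{mainidentity25}, using the identification $P^{2,6}_{n}=Q_{n}$ already noted in Section~1. Your extra care in checking the initial conditions and the $Q_{-1}$ term at $i=0$ is a welcome addition but does not change the argument.
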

\begin{cor}
    If we set $c=1$ in \eqref{mainidentity25}, then the following
     identity holds for generalized Pell numbers
 \bq
     \label{id1}
     P^{a,b}_{m+1}=b-2\, a+\dd\sum_{i=0}^{m}
      \left\{P^{a,b}_{i}+P^{a,b}_{i-1}\right\}.
     \eq
    \end{cor}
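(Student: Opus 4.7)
The plan is to deduce this corollary directly from the already-established identity \eqref{mainidentity25}, exactly as the preceding corollaries are obtained from their parent identities by specializing a parameter. Since \eqref{mainidentity25} holds for every nonzero real $c$, and $c=1$ is certainly admissible, I would simply substitute $c=1$ on both sides and simplify.

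More concretely, I would first observe that setting $c=1$ makes the prefactor $c^{m+1}$ on the left-hand side collapse to $1$, so the left side becomes $P^{a,b}_{m+1}$. Next, inside the sum on the right, each power $c^{i}$ becomes $1$, and crucially the middle term $(c-1)\,P^{a,b}_{i+1}$ vanishes identically because its coefficient is $0$. What remains inside the braces is exactly $P^{a,b}_{i}+P^{a,b}_{i-1}$, and the additive constant $b-2a$ is untouched by the specialization. Combining these observations yields the claimed identity.

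There is essentially no obstacle here: the corollary is a one-line specialization of \eqref{mainidentity25}, requiring no induction, no use of the recurrence \eqref{recgen}, and no auxiliary identities. The only thing worth a brief sanity check is that the indexing convention for $P^{a,b}_{-1}$ (appearing when $i=0$) is consistent with the recurrence \eqref{GenPell}; this is implicit in the statement of the parent theorem, so inheriting it causes no trouble. Thus the proof reduces to writing ``setting $c=1$ in \eqref{mainidentity25} yields \eqref{id1}'' and is complete.
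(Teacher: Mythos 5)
Your proof is correct and matches the paper's intended argument exactly: the corollary is obtained by the direct substitution $c=1$ into \eqref{mainidentity25}, which kills the $(c-1)\,P^{a,b}_{i+1}$ term and reduces all powers of $c$ to $1$. Nothing further is required.
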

  \noindent

\end{document}